\newfont{\eufm}{eufm10 scaled\magstep1}
\newcommand{\cC}{\mathcal{C}}
\newcommand{\cL}{\mathcal{L}}
\newcommand{\bbC}{\mathbb{C}}
\def\Spec{{\rm Spec}}
\def\dres{\partial{\rm Res}}
\def\ord{\rm ord}
\def\Ker{\rm Ker}
\def\SpF{\texttt{SpF}}
\def\sBsq{\textbf{Bsq}}
\def\Bop{\textsc{Bsq }}
\def\para{\vspace{1.5 mm}}
\def\Sing{\rm Sing}
\def\gcd{\rm gcd}
\newtheorem{thm}{Theorem}[section]
\newtheorem{lemma}[thm]{Lemma}
\newtheorem{cor}[thm]{Corollary}
\newtheorem{prop}[thm]{Proposition}
\newtheorem{defi}[thm]{Definition}
\newtheorem{rem}[thm]{Remark}
\newtheorem{ex}[thm]{Example}
\date{} %
\begin{document}

\title{Factoring Third Order Ordinary Differential Operators\\ over Spectral Curves}

\author{Sonia L. Rueda\\
Dpto. de Matem\' atica Aplicada, E.T.S. Arquitectura\\
Universidad Polit\' ecnica de Madrid.\\
Avda. Juan de Herrera 4, 28040-Madrid, Spain.\\
\tt{sonialuisa.rueda@upm.es}
\and
Maria-Angeles Zurro\\
Dpto. de Matem\' aticas\\
Universidad Aut\' onoma de Madrid\\
Campus de Cantoblanco, Madrid, Spain\\
\tt{mangeles.zurro@uam.es}
}

\maketitle

\thispagestyle{empty}

\begin{abstract}
We consider the classical factorization problem of a third order ordinary differential operator $L-\lambda$, for a spectral parameter $\lambda$.
  It is assumed that $L$ is an algebro-geometric operator, that it has a nontrivial centralizer, which can be seen as the affine ring of curve, the famous {\it spectral curve} $\Gamma$. In this work we explicitly describe the ring structure of the  centralizer of $L$ and, as a consequence, we prove that $\Gamma$  is a space curve. In this context, the first computed example of a non-planar spectral curve arises, for an operator of this type. Based on the structure of the centralizer, we give a symbolic algorithm, using differential subresultants, to factor $L-\lambda_0$ for all but a finite number of points $P=(\lambda_0 , \mu_0 , \gamma_0)$ of the spectral curve . 
\end{abstract}

\textit{Keywords:}
Factorization, ordinary differential operators,  differential resultant,  differential subresultant,  spectral curve.

\medskip

\textsc{MSC[2010]:}  13P15, 12H05

\section{Introduction}
The factorization of ordinary differential operators, from the point of view of symbolic computation, has attracted much attention at least for a couple of decades, see for instance \cite{BRONSTEIN1996, Fredet2003,  Hoeij1997-2, singer1996testing, Schwarz, Zhang2012}. A new approach was jet recently presented in \cite{MRZ1}, for the factorization of (second order)  algebro-geometric ordinary differential operators, equivalently operators having a nontrivial centralizer. It is indeed the centralizer, the set of all operators commuting with a given operator $L$, the structure that guaranties an effective factorization of $L-\lambda$, for an spectral parameter $\lambda$. Continuing with this line of work, in this occasion we will consider the effective factorization problem of $L-\lambda$ for an ordinary third-order differential operator
\begin{equation}\label{eq-operator-L}
L = \partial^ 3 + u_1 \partial + u_0,    
\end{equation}
with (stationary) potentials $u_0$, $u_1$ in a differential field $K$, with derivation $\partial$ and field of constants $\bbC$, the field of complex numbers.  
The potentials $u_0$ and $u_1$ will be assumed to be solutions of a stationary Boussinesq system \cite{DGU}. For short, we will call $L$ a {\sl Boussinesq operator}.


Boussinesq  systems have been widely studied, especially their rational solutions \cite{Clarkson2009, DGU, SC}. They generate a hierarchy of integrable equations, the {\sl Boussinesq hierarchy}, one of the Gelfand and Dickii integrable hierarchies of equations associated to differential operators of any order \cite{Dikii}. {The stationary version of the Boussinesq hierarchy ultimately gives families of differential polynomials, in the coefficients of $L$, that are conditions for the existence of a nontrivial operator $A$ commuting with $L$. In other words, Boussinesq operators have nontrivial centralizers, which are an essential ingredient of this work.}

The  Burchnall and Chaundy Theorem \cite{BC}, implies that Boussinesq operators are algebro-geometric differential operators \cite{wilson60algebraic}. This famous theorem establishes a correspondence between commuting differential operators and algebraic curves. The {\sf spectral curve}, classically defined by the so called Burchnall and Chaundy (BC) polynomial, allows an algebro-geometric approach to handling the direct and inverse spectral problems for the {\sf finite-gap} operators \cite{K77}. It is another famous result, Schur's Theorem \cite{Sch}, the one ensuring that centralizers have quotient fields that are function fields of one variable, therefore they can be seen as affine rings of curves, and in a formal sense these are {\it spectral curves} \cite{PRZ2019}.

Let us assume we are given a Boussinesq  operator $L$.
Our goal is to develop a factorization algorithm for $L-\lambda_0$, $\lambda_0\in\bbC$ as an operator in $K[\partial]$, for almost every point $P_0=(\lambda_0,\mu_0,\gamma_0)$ of the spectral curve $\Gamma$ of $L$. For this purpose, we have to establish an appropriate theoretical framework that we describe next. First we use Goodearl's results in \cite{Good} to give a precise description of the centralizer $\cC_K(L)$ of $L$ in $K[\partial]$, in Theorem \ref{thm-centralizer}. 

\para

{\bf Theorem A.} 
Let $L$ be a Boussinesq  operator in $K[\partial]$. 
Then $L$ has a nontrivial centralizer $\cC_K(L)$, that equals the free $\bbC[L]$-module of rank $3$ with basis $\{1, A_{1} , A_{2}\}$, with $A_i$ an operator of minimal order $3 n_i +i$, $n_i\geq 0$,  $i=1,2$. That is $\cC_K(L)=\bbC[L,A_1,A_2]$.

\para

The second part of the theoretical framework is to identify the spectral curve $\Gamma$ of the Boussinesq operator $L$. Considering the structure of the centralizer, we will prove that $\Gamma$ is an affine algebraic curve in $\mathbb{C}^3$, {in a generic situation  (where $A_2$ does not depend on $A_1$ ). This result is established for the first time, as far as we know. Spectral curves of Boussinesq operators are treated as planar curves in the existing literature (see for instance \cite{DGU}), but our results show that the planar curve situation is only a particular case}. 

We will prove that the defining ideal $I$ of $\Gamma$ is generated in $\bbC[\lambda,\mu,\gamma]$ by the BC polynomials $f_1, f_2, f_3$ of $L-\lambda$, $A_1-\mu$ and $A_2-\gamma$ taken pairwise. These polynomials will be computed using differential resultants. We call $L$ a {\sl geometrically reducible} Boussinesq operator, if its associated ideal $I=(f_1,f_2,f_3)$ is a prime ideal in $\mathbb{C}[\lambda,\mu,\gamma]$. The previous results are proved in Theorem \ref{thm-spectral curve} that can be also stated as follows.

\para

{\bf Theorem B.} 
Let $L$ be a geometrically reducible Boussinesq  operator in $K[\partial]$. 
Then $\cC_K(L)$ is isomorphic to the ring of the affine algebraic curve $\Gamma$ in $\bbC^3$ defined by the prime ideal $I=(f_1,f_2,f_3)$. More precisely, $ \cC_K(L)   \simeq {\bbC [\lambda, \mu ,\gamma]}/{I}$.

\para

We give the first example of a (geometrically reducible) Boussinesq operator with non-planar spectral curve in Example \ref{ex-alabeada}. As far as we know, it is the first example of a non-planar spectral curve explicitly computed.

\medskip

The third part of the theoretical framework consist of proving that, for almost every $P_0=(\lambda_0,\mu_0,\gamma_0)$ in $\Gamma$, all but a finite number of points in a set $Z$, then $L-\lambda_0, A_1-\mu_0, A_2-\gamma_0$ have a greatest common right factor $\partial+\phi_0$.
The differential resultant of two differential operators is a condition on their coefficients that guaranties a nontrivial right common factor. See \cite{MRZ1}, Section 3.2 for the definition and main properties of differential resultants and subresultants. We use differential subresultants to compute $\partial+\phi_i(\lambda,\mu,\gamma)$, $i=1,2,3$, the greatest common right divisor (\gcd), of $L-\lambda$, $A_1-\mu$ and $A_2-\gamma$ pairwise. The finite set $Z$ is thus the natural one, containing the singular points of $\Gamma$ and the points where each $\phi_i$ is not well defined. 
We prove in Proposition \ref{prop-factor-at-P0} that for almost every point $P_0=(\lambda_0,\mu_0,\gamma_0)$ of $\Gamma$, then $L-\lambda_0$ admits a {\sl spectral factorization}, given by the right factor
\[\partial+\phi_0=\partial+\phi_i(P_0).\]
In general, $\Gamma$ is not a smooth curve. Its singular points support pathological factorizations, that we will address in future works. We  establish in the following theorem the generic behavior of the factorization.

\medskip

{\bf Theorem C.}\label{thm-intro-alg-correctness}
Let us consider a geometrically reducible Boussinesq operator $L$ in $K[\partial]$ and $\lambda_0 \in\bbC$. Then for every $P_0=(\lambda_0,\mu_0,\gamma_0)$ in \ $\Gamma\backslash Z$ the first order operator $\partial+\phi_0=\gcd (L-\lambda_0, A_1 -\mu_0,A_2-\gamma_0)$ provides the spectral factorization,
\begin{equation*}
    L -\lambda_0 =(\partial^2 +\phi_0 \partial+\phi_0^2+2\phi_0'+u_1)\cdot (\partial+\phi_0).  
\end{equation*}

Finally the theoretical framework is ready to state the Spectral Factorization (\SpF) Algorithm \ref{alg-SpF}, with input list $[L, \lambda_0 ]$ and output list $\left[ I, P_0, \partial+\phi_0 \right]$, whenever a point $P_0=(\lambda_0,\mu_0,\gamma_0)$ of \ $\Gamma\backslash Z$ exists and the defining ideal $I$ of the spectral curve is prime.
The performance of the algorithm is illustrated in Example \ref{ex-alabeada}. This algorithm was implemented in {\bf Maple 20}.

\para 

{\it The paper is organized as follows}.
In Section \ref{sec-Bsq-hierarchy} we present Boussinesq differential systems, in a convenient way to study the factorization problem of a third order Boussinesq operator $L$. It also contains Theorem \ref{thm-centralizer} on the structure of the centralizer $\cC_K (L)$ as a free $\bbC[L]$-module. These modules have also a ring structure, and in Section \ref{sec-Differential Resultant} we prove Theorem \ref{thm-spectral curve}, where we compute the defining ideal of the spectral curve $\Gamma= \Spec{\left( \cC_K (L)\right)}$ in $\mathbb{C}^3$.  Section \ref{sec-factoring-Bsq} includes two main achievements of our work. First the theoretical results on the right factor of $L-\lambda_0$ are given in Proposition \ref{prop-factor-at-P0} and Theorem \ref{thm-alg-correctness}; second, a symbolic algorithm, \texttt{SpF} Algorithm \ref{alg-SpF}, that computes the first order right factor of $L-\lambda_0$, for all but a finite number of points of $\Gamma$. Moreover it includes Example \ref{ex-alabeada}, which is an important contribution of this article, as far as we know, the first effectively computed example of a non planar spectral curve.

\medskip

\section{Boussinesq Hierarchy and Centralizers}\label{sec-Bsq-hierarchy}

In this section we first define the Boussinesq differential systems, adapting the presentation given in \cite{DGU}, where the operator~$L$ is written as
\begin{equation}\label{eq-BsqDGU-L3}
 L =\partial^3 +q_1 \partial +\frac12q_1'+q_0.
\end{equation}
We denote $q_j'=\partial(q_j)$ and $q_j^{(n)}=\partial^n(q_j)$, $n\geq 1$.

\para

Using the notation of \cite{DGU}, we consider a differential recursion for two sequences of differential polynomials $\{f_{n,i}\}_{n\geq 0}$ and $\{g_{n,i}\}_{n\geq 0} $, $i=1, 2$. Precisely,   
\begin{equation*}
\left\{
\begin{array}{ll}
     3\ \partial (f_{n , i})&= 2\partial^3 (g_{n-1,i})+2 q_1 \partial (g_{n-1,i})+q_{1}' g_{n-1,i}\\
     &+3 q_0 \partial( f_{n-1,i})+2 q_{0}'f_{n-1,i},\\
     3\ \partial (g_{n,i})&= 3 q_{0} \partial (g_{n-1,i})  + q_{0}'g_{n-1,i}-\frac{1}{6}\partial^5 (f_{n-1 ,i})\\
      &-\frac{5}{6}q_1 \partial^3 (f_{n-1 ,i})  -\frac{5}{4}q_1'\partial^2 (f_{n-1,i})\\
      &-\left(
      \frac{3}{4}q_1''+\frac{2}{3} q_1^2
      \right)\partial (f_{n-1,i})-
      \left(
       \frac{1}{6}q_1''' +\frac{2}{3}q_1 q_1'
       \right) f_{n-1,i} ,
\end{array}
\right.
\end{equation*}
with initial conditions: $(f_{0,1}, g_{0,1})=(0,1) $ and $(f_{0,2},g_{0,2})=(1,0)$. 

\para

Next, adapting \cite{DGU}, formula (5.5), with zero integration constants, we consider a family of differential operators associated to the operator $ L $ given in \eqref{eq-BsqDGU-L3}.
We define the \Bop differential operators
\begin{equation}\label{defi-Bsq-formal-operators}
P_i=L_{0,i},\,\,\, P_{3n+i}=P_{3n-3+i}L_3+L_{n,i} \ , i=1,2,   
\end{equation}
where 
\begin{equation*}
 L_{n,i}=f_{n,i} \partial^2 + 
    \left( g_{n,i} -\frac{1}{2} \partial (f_{n , i}) 
    \right) \partial 
    +
    \left(
    \frac{1}{6} \partial^2 (f_{n,i}) -\partial (g_{n,i}) +
    \frac{2}{3} q_1 f_{n,i}\right).    
\end{equation*}

In previous notations, see \cite{DGU}, we define the {\sl $n$th stationary Boussinesq system, in the $i$th-branch, of the Boussinesq hierarchy} as
\begin{equation}\label{eq-Bsq-system}
\sBsq_{n,i} := 
3\left(\ 
 \partial (f_{n+1 , i}),\,
 \partial (g_{n+1 , i})\ 
\right) \ .
\end{equation}

\begin{rem}\label{rem-ctes}
Observe that $\sBsq_{n,i}$ is a systems of differential polynomials in $\bbC[{\bf c}]\{{\bf u}\}$, with a set of algebraic variables ${\bf c}=\{c_0,\ldots ,c_n\}$, called {\sl integration constants} and a set of differential variables ${\bf u}=\{u_0,u_1\}$, with $u_1 =q_1$ and $u_0= \frac12 q_1' +q_0$. As a consequence, $P_{3n+i}$ are generic differential operators in $\bbC[{\bf c}]\{{\bf u}\}[\partial]$. We emphasize the dependence of potentials and constants as follows
\begin{equation}
 \sBsq_{n,i} =\sBsq_{n,i} (u_0 ,u_1 , {\bf c}) \ , \ \textrm{and }\quad    P_{3n+i}= P_{3n+i}(u_0 ,u_1 , {\bf c}). 
\end{equation}
\end{rem}

\begin{ex}\label{ex-Bsq-symbolic-eq}
Let $m$ be a positive integer not divisible by $3$, that is $m=3n+i$ for  $i=1,2$. For $n=0,1$  we write next the first four stationary Boussinesq systems.
\begin{enumerate}
    \item For $n=0$, we have  the following stationary Boussinesq systems:
    \begin{equation}
        \sBsq_{0,1}= (q_1', q_0' ) ,
    \end{equation}
    \begin{equation}
        \sBsq_{0,2}=\left(2 q_0' ,
        -\frac16 q_1''' -\frac23 q_1 q_1'\right)+c_0 \sBsq_{0,1} .
    \end{equation}
For $m$ equal $1,2$ the corresponding $\Bop$  operators read
    \[
    P_{1}=\partial \  \textrm{ and } \  P_{2}= \partial^2+\frac23 q_1+c_0 P_1\ .
    \]
    \item For $n=1$ we write below the stationary Boussinesq systems $\sBsq_{1,1}$ and $\sBsq_{1,2}$. In this case $m=4,5$ and we obtain the $\Bop$ operators: $P_{4}$ and $P_5$.
    \begin{align*}
        \sBsq_{1,1}= (b_{11}^1 ,b_{11}^2) +&c_1 \sBsq_{0,2}(q_0 ,q_1 ,0)+c_0 \sBsq_{0,1} ,\\
        \sBsq_{1,2}=(b_{12}^1 ,b_{12}^2)+
       &c_2 \sBsq_{1,1} (q_0 ,q_1, (0,0)) 
       + c_1 \sBsq_{0,2}(q_0 ,q_1 ,0)+\\  &c_0 \sBsq_{0,1} ,
    \end{align*}
   
 with
 \begin{align*}
      b_{11}^1 =&\frac23 q_0''' + \frac43 q_1 q_0' +\frac43 q_0 q_1',\\
      b_{11}^2 =&-\frac{q_1^{(5)}}{18}-\frac13 q_1 q_1''' -\frac23 q_1'q_1''-\frac49 q_1^2 q_1' +\frac43 q_0 q_0' ,\\
      b_{12}^1 =& -\frac{ q_1^{(5)}}{9} -\frac59 q_1 q_1'''  -\frac{25}{18} q_1' q_1''
 -\frac59 q_1^2 q_1' +\frac{10}{3} q_0 q_0' \\
      b_{12}^2 =&\frac19 q_0^{(5)} +\frac{5}{18} q_0 q_1''' +\frac59 q_1 q_0'''+\frac59 q_1'' q_0'+\frac56 q_1' q_0''\\
 &+ \frac59 q_1^2 q_0' +\frac{10}{9} q_0 q_1 q_1' ,
 \end{align*}
\begin{align*}
    P_{4}=&\partial^4+\frac43 q_1\partial^2+\left(\frac43 q_0+\frac43 q_1'\right)\partial+\frac59 q_1'' + \frac23 q_0'+\frac29 q_1^2+c_1P_2+c_0P_1\\
    P_5=&\partial^5+\frac53 q_1\partial^3+\left(\frac53 q_0+\frac52 q_1' \right)\partial^2+\left( \frac59 q_1^2+\frac{35}{18}q_1''+\frac53 q_0' \right)\partial\\
    &+\frac{10}{9}q_0''+\frac59 q_1q_1''\frac59 q_1'''+\frac{10}{9}q_1q_0+c_2P_4+c_1P_2+c_0P_1
\end{align*}
\end{enumerate}
\end{ex}

\medskip
In the remaining parts of the section, we establish the algebraic structure of the centralizer of $L=\partial^3+\tilde{u}_1\partial+\tilde{u}_0$, with $\tilde{u}_0$ and $\tilde{u}_1$ satisfying one of the Boussinesq systems of the Boussinesq hierarchy (Theorem \ref{thm-centralizer}). We will illustrate these results computing some examples.

\begin{defi}\label{defi-Bsq-operators}
We call a  differential operator $L=\partial^3+\tilde{u}_1\partial+\tilde{u}_0$ in $K[\partial]$  a Boussinesq operator if $\tilde{u}_0, \tilde{u}_1\in K$ verify a Boussinesq system for some $s$ and a choice of integration constants $\tilde{\bf c}=(\tilde{c}_0,\ldots ,\tilde{c}_s)$ in $\bbC^{s+1}$.
That is, for $i$ equal $1$ or $2$
\begin{equation}\label{eq-Bsq-poly}
\sBsq_{s,i} ( \tilde{u}_0 , \tilde{u}_1,\tilde{\bf c} )=(0,0)
    \ .
\end{equation}
\end{defi}

In addition, the following lemma shows that $L=\partial^3+\tilde{u}_1\partial+\tilde{u}_0$ is a Boussinesq operator if and only if $P_{3s+i} ( \tilde{u}_0 , \tilde{u}_1,\tilde{\bf c} )$ belongs to the centralizer of $L$.

\begin{lemma} \label{lemma-system}
With the previous notation, given $L=\partial^3+\tilde{u}_1\partial+\tilde{u}_0$ in $K[\partial]$, the following equivalence holds for each vector $\tilde{\bf c}=(\tilde{c}_0,\ldots ,\tilde{c}_n)$ in $\bbC^{n+1}$, $n\geq 0$, $i=1,2$:
\begin{equation*}
    \sBsq_{n,i} ( \tilde{u}_0 , \tilde{u}_1,\tilde{\bf c} )=(0,0)
    \Longleftrightarrow  
        [P_{3n+i} ( \tilde{u}_0 , \tilde{u}_1,\tilde{\bf c} ), L]=0\textrm{ in }  K[\partial] .
\end{equation*}
\end{lemma}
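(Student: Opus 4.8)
The plan is to unwind both sides of the equivalence into the same object: the coefficient of the operator equation $[P_{3n+i},L]=0$. The starting point is the known fact (to be recalled from the construction in \cite{DGU}, on which \eqref{defi-Bsq-formal-operators} is modeled) that the \Bop operators $P_{3n+i}$ are built precisely so that the commutator $[P_{3n+i},L]$ is, up to a universal nonzero scalar, a zero-order operator (a multiplication operator) whose coefficient is an explicit combination of the two components of $3\,(\partial(f_{n+1,i}),\partial(g_{n+1,i}))=\sBsq_{n,i}$. More precisely, I would first establish, working in the generic ring $\bbC[{\bf c}]\{{\bf u}\}[\partial]$, the identity
\[
[P_{3n+i},L] \;=\; \alpha\,\partial(f_{n+1,i})\,\partial + \beta_1\,\partial(f_{n+1,i}) + \beta_2\,\partial(g_{n+1,i}) + (\text{terms that vanish when } \partial(f_{n+1,i})=\partial(g_{n+1,i})=0),
\]
for suitable differential polynomials $\alpha,\beta_1,\beta_2$ with $\alpha$ a nonzero constant. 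This is the computational heart of the argument, and I expect it to be the main obstacle: one must expand $[P_{3n-3+i}L_3 + L_{n,i},\,L]$, use the recursion inductively to telescope the lower-order pieces, and check that the leading surviving contribution is governed exactly by $\partial(f_{n+1,i})$ and $\partial(g_{n+1,i})$.

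Granting that identity, the forward implication ($\Rightarrow$) is immediate: substituting $\tilde u_0,\tilde u_1,\tilde{\bf c}$ with $\sBsq_{n,i}(\tilde u_0,\tilde u_1,\tilde{\bf c})=(0,0)$ kills $\partial(f_{n+1,i})$ and $\partial(g_{n+1,i})$, and by construction the remaining terms vanish as well, so $[P_{3n+i}(\tilde u_0,\tilde u_1,\tilde{\bf c}),L]=0$ in $K[\partial]$. For the converse ($\Leftarrow$), suppose $[P_{3n+i}(\tilde u_0,\tilde u_1,\tilde{\bf c}),L]=0$. Reading off the coefficient of $\partial^1$ in the displayed commutator gives $\alpha\,\partial(f_{n+1,i})=0$, hence $\partial(f_{n+1,i})=0$ since $\alpha\in\bbC^\times$; feeding this back, the coefficient of $\partial^0$ collapses to (a nonzero multiple of) $\partial(g_{n+1,i})$, forcing $\partial(g_{n+1,i})=0$ as well. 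Therefore $\sBsq_{n,i}(\tilde u_0,\tilde u_1,\tilde{\bf c})=(0,0)$.

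An alternative, slightly more structural route for the core identity avoids brute expansion: one can argue by induction on $n$ using the recursive definition $P_{3n+i}=P_{3n-3+i}L_3+L_{n,i}$. If $[P_{3n-3+i},L]$ has already been shown to be a multiplication operator with coefficient a combination of $\partial(f_{n,i}),\partial(g_{n,i})$, then
\[
[P_{3n+i},L] \;=\; [P_{3n-3+i},L]\,L_3 \;+\; P_{3n-3+i}\,[L_3,L] \;+\; [L_{n,i},L],
\]
and the recursion for $\{f_{n,i}\},\{g_{n,i}\}$ is exactly what is needed to combine the last two terms into a multiplication operator governed by $\partial(f_{n+1,i}),\partial(g_{n+1,i})$, with the first term handled by the inductive hypothesis. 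The base cases $n=0$ (and the sample computations $P_1,P_2,P_4,P_5$ of Example \ref{ex-Bsq-symbolic-eq}) provide the anchor. Either way, once the commutator is pinned down as a multiplication operator with the stated coefficients, the lemma follows by comparing coefficients, as above.
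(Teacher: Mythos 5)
Your proposal is correct and follows essentially the same route as the paper: the paper's proof consists of quoting the identity $[P_{3n+i},L]=3\partial(f_{n+1,i})\partial+3\bigl(\tfrac12\partial^2(f_{n+1,i})+\partial(g_{n+1,i})\bigr)$ from \cite{DGU}, formula (5.6), and the equivalence then follows by comparing coefficients exactly as you describe (your unspecified extra term is $\tfrac32\partial^2(f_{n+1,i})$, which indeed vanishes once $\partial(f_{n+1,i})=0$). The only difference is that you propose to re-derive this identity by induction on the recursion \eqref{defi-Bsq-formal-operators}, whereas the paper simply cites it.
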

\begin{proof}
The statement follows by \cite{DGU}, (5.6), namely
\[[P_{3n+i},L]=3\partial(f_{n+1,i})\partial+3\left(\frac12\partial^2 (f_{n+1,i})+\partial(g_{n+1,i})\right).\]
\end{proof}


{\begin{ex}\label{ex-L3-first}
Let us consider the differential field of rational functions with complex coefficients $K=\bbC (x)$, and the differential operator in $K[\partial ]$
\begin{equation}
     L =\partial^3 -\frac{15}{x^{2}} \partial +\frac{15}{x^{3}}+h ,
\end{equation}
obtained from formula \eqref{eq-BsqDGU-L3} for $ q_0 = h$ and $ q_1 = -\frac{15}{x^{2}}$, with $h\in \mathbb{C}$. {For $h=0$ this is Example 8.3 (ii) in \cite{DGU}.}
The first Boussinesq systems 
satisfied in each branch $i=1,2$ are:
\[
\sBsq_{1,1}\left( 15/x^3 +h, -15/x^2 ,\tilde{c}_{1,1}\right)=0 , \mbox{ for }\tilde{c}_{1,1}=\left(-\frac43 h,0\right)
\]
and
\[
\sBsq_{2,2}\left( 15/x^3 +h, -15/x^2 ,\tilde{c}_{2,2}\right)=0, \mbox{ for }\tilde{c}_{2,2}=\left(0,\frac{20}{9}h^2,0,-\frac83 h,0\right).
\]
Hence, by Lemma \ref{lemma-system}, $[L, P_4 ]=0$ and $[L,P_8 ]=0$ where $P_4$ and $P_8$ are given in  \eqref{defi-Bsq-formal-operators}, substituting $u_0$ by $ \tilde{u}_0=15/x^3 +h $ and $u_1$ by $ \tilde{u}_1= -15/x^2 $. 

Moreover, we have the commuting operators:
\[
P_4(\tilde{u_0},\tilde{u_1},\tilde{c}_{1,1}) = \partial^4 -\frac{20}{x^2 }\partial^2 +\frac{40}{x^3 }\partial , \mbox{ and }
\]
\[
P_8(\tilde{u_0},\tilde{u_1},\tilde{c}_{2,2}) =  \partial^8 -\frac{40}{x^2 }\partial^6 +\frac{240}{x^3 }\partial^5- \frac{800}{x^4 }\partial^4+\frac{1600}{x^5 }\partial^3-\frac{1600}{x^6 }\partial^2.
\]
Observe that 
$
\sBsq_{1,2}\left( 15/x^3 +h, -15/x^2 ,\tilde{\bf c}\right)\not= 0 , 
$
 for any choice of vector of constants $\tilde{\bf c}$.
\end{ex}
}

\para

From now on, we assume $L=\partial^3+\tilde{u}_1\partial+\tilde{u}_0$ to be a Boussinesq operator, whose potentials $\tilde{u}_0, \tilde{u}_1\in K$ verify a stationary Boussinesq system for some $s$ and a choice of integration constants $\tilde{\bf c}=(\tilde{c}_0,\ldots ,\tilde{c}_s)$ in $\bbC^{s+1}$. We will proceed next to analyze the algebraic structure that will allow factoring $L-\lambda_0$ for almost all $\lambda_0\in\bbC$. This structure is the centralizer $\cC_K(L)$ of $L$ in $K[\partial]$, as a $\bbC [L]$-submodule, namely
\begin{equation*}
    \cC_K(L)=\{A\in K[\partial]\mid LA=AL\}.
\end{equation*}

By \cite{Good}, Theorem 4.2, $\cC_K(L)$ is a commutative integral domain. Moreover, by \cite{Good}, Theorem 1.2, $\cC_K(L)$ is a free $\bbC[L]$-module with basis $\{A_i\mid Y\subseteq \{0,1,2\}\}$, where $i\in Y$ if there exists $A_i\in \cC_K(L)$ of order $\ord(A_i)=3n_i+i$, with $n_i\geq 0$ minimal for this condition. In particular $A_0=1$. Moreover, the rank of $\cC_K(L)$ as a free $\bbC[L]$-module is a divisor of $3$. Observe that an operator can have a trivial centralizer, that is $\cC_K(L)=\bbC[L]$, of rank $1$ as a $\bbC[L] $-module. We obtain the following result.

\begin{thm}\label{thm-centralizer}
Let $L=\partial^3+\tilde{u}_1\partial+\tilde{u}_0$ be a Boussinesq  operator as defined in \ref{defi-Bsq-operators}. Then $L$ has a nontrivial centralizer in $K[\partial]$ that equals the free $\bbC[L]$-module of rank $3$ with basis $\{1, A_{1} , A_{2}\}$, that is
\begin{equation}
 \begin{array}{cl}
   \cC_K(L)   &  =\{p_0(L)+p_1(L)A_{1}+p_2(L)A_{2}\mid p_i \in \bbC[L]\}\\
    \  & = \bbC[L]\langle 1,A_{1},A_{2 }\rangle,
 \end{array}   
\end{equation}
with $A_i$ an operator of minimal order $3 n_i +i$, $i=1,2$ and $n_i\geq 0$.
\end{thm}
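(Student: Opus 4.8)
The plan is to extract from the Boussinesq hypothesis a single operator lying in $\cC_K(L)$ whose order is not divisible by $3$, and then let Goodearl's structural constraints force the full rank-$3$ description. First I would unwind Definition \ref{defi-Bsq-operators}: it provides an index $s\ge 0$, a branch $i\in\{1,2\}$ and a vector $\tilde{\bf c}=(\tilde{c}_0,\ldots,\tilde{c}_s)\in\bbC^{s+1}$ with $\sBsq_{s,i}(\tilde{u}_0,\tilde{u}_1,\tilde{\bf c})=(0,0)$. Applying the ``$\Rightarrow$'' implication of Lemma \ref{lemma-system} with $n=s$ and these constants, the operator $P:=P_{3s+i}(\tilde{u}_0,\tilde{u}_1,\tilde{\bf c})\in K[\partial]$ commutes with $L$, hence $P\in\cC_K(L)$.

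Next I would pin down the order of $P$. A short induction on $n$ using the recursion \eqref{defi-Bsq-formal-operators} shows that $P_{3n+i}$ is monic of order exactly $3n+i$: by Example \ref{ex-Bsq-symbolic-eq} the base cases $P_1$, $P_2$ are monic of orders $1$ and $2$, and the step $P_{3n+i}=P_{3n-3+i}L_3+L_{n,i}$ adds $3$ to the order since $L_3=L$ is monic of order $3$ while $\ord(L_{n,i})\le 2$, the lower-order terms carrying the integration constants leaving the leading coefficient unchanged. In particular $P\neq 0$ and $\ord(P)=3s+i\not\equiv 0\pmod 3$. Since every nonzero element of $\bbC[L]$ is either a constant or has order a positive multiple of $3$, we conclude $P\notin\bbC[L]$, that is $\cC_K(L)\supsetneq\bbC[L]$: the centralizer of $L$ is nontrivial.

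Finally I would invoke Goodearl's results, recalled just before the statement. By \cite{Good}, Theorem~4.2, $\cC_K(L)$ is a commutative integral domain, and by \cite{Good}, Theorem~1.2, it is a free $\bbC[L]$-module with basis $\{A_j\mid j\in Y\}$ for some $Y\subseteq\{0,1,2\}$ containing $0$ (with $A_0=1$), where $A_j$ has minimal order $3n_j+j$ and $\mathrm{rank}_{\bbC[L]}\cC_K(L)=|Y|$ is a divisor of $3$. The nontriviality just established gives $|Y|\ge 2$, and the only divisor of $3$ that is $\ge 2$ is $3$; hence $Y=\{0,1,2\}$. Therefore there exist operators $A_1,A_2\in\cC_K(L)$ of minimal orders $3n_1+1$ and $3n_2+2$ with $n_1,n_2\ge 0$, such that $\{1,A_1,A_2\}$ is a $\bbC[L]$-basis of $\cC_K(L)$; equivalently $\cC_K(L)=\bbC[L]\langle 1,A_1,A_2\rangle$, as claimed.

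The only genuinely computational point is the order and leading-coefficient bookkeeping for the operators $P_{3n+i}$, which is routine from \eqref{defi-Bsq-formal-operators}. The conceptual weight is carried by the clause ``$\mathrm{rank}\mid 3$'' in Goodearl's theorem: this is exactly what promotes the existence of one commuting operator of order $\equiv i\pmod 3$ to the full rank-$3$ conclusion, leaving no intermediate case to rule out by hand. I would expect this to be the step most worth stating with care, citing the precise Goodearl results that give both freeness over $\bbC[L]$ and the divisibility of the rank.
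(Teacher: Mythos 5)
Your proposal is correct and follows essentially the same route as the paper: produce $P_{3s+i}\in\cC_K(L)$ via Lemma \ref{lemma-system}, then let Goodearl's freeness and rank-divides-$3$ constraints force the basis $\{1,A_1,A_2\}$. You additionally make explicit two points the paper's proof leaves implicit --- that $\ord(P_{3s+i})=3s+i\not\equiv 0\pmod 3$ so $P_{3s+i}\notin\bbC[L]$ (hence the centralizer is genuinely nontrivial), and that a rank $\ge 2$ dividing $3$ must equal $3$ --- which is a welcome tightening rather than a deviation.
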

\begin{proof}
By Lemma \ref{lemma-system}, a Boussinesq  operator has a nontrivial centralizer since $P_{3s+i}( \tilde{u}_0 , \tilde{u}_1,\tilde{\bf c})$ belongs to the centralizer for $s\geq 0$, $i$ equal $1$ or $2$ and  $\tilde{\bf c}\in \bbC^{s+1}$.
In addition, we know that the rank of $\cC_K(L)$ as a free $\bbC[L]$-module is a divisor of $3$, thus its basis must be $\{1,A_1,A_2\}$, with $A_i$ as defined above.
\end{proof}

By the previous theorem the centralizer of $L$ is the domain
\begin{equation*}
    \cC_K(L)=\bbC[L,A_{1},A_{2}].
\end{equation*}
For any $L$ in $K[\partial ]$, centralizers $\cC_K(L)$ are maximal commutative subrings.
So, given a differential operator $M$ that commutes with $L$, we have the sequence of inclusions 
\begin{equation*}
{\bf C}[L]\subseteq {\bf C}[L,M] \subseteq \cC_{K}(L) ,  
\end{equation*} 
and all of them could be strict. In the case of a Boussinesq operator $L=\partial^3+\tilde{u}_1\partial+\tilde{u}_0$, in notations of Theorem \ref{thm-centralizer}, we have the following ring diagram.
\begin{equation}\label{diagram}
\begin{tikzcd}
                                         &  & {\bbC[L ,A_{1}]} \arrow[rrd]   &  &                  \\
{\bbC[L ]} \arrow[rru]  \arrow[rrd] &  & {\bbC[A_{1} ,A_{2}]} \arrow[rr] &  & {\bbC[L ,A_{1} ,A_{2} ]} \\
                                          &  & {\bbC[L ,A_{2}]} \arrow[rru]   &  &   
\end{tikzcd}
\end{equation}

\para 

We give next an example of the generators of the centralizer of a  Boussinesq operator with rational coefficients.


\begin{ex}\label{ex-centralizer-Bsq}
Continuing with Example \ref{ex-L3-first}, we can compute the centralizer of the Boussinesq operator
\begin{equation*}
     L =\partial^3 -\frac{15}{x^{2}} \partial +\frac{15}{x^{3}}+h \ .
\end{equation*}
With the notation of the previous theorem, we have $A_1 = P_4$ and $A_2 = P_8$. It should be noted that there is no $5$-th order operator in the centralizer $\cC_K(L)$ because
$
\sBsq_{1,2}\left( 15/x^3 +h, -15/x^2 ,\tilde{\bf c}\right)\not= 0 , 
$ for any choice of vector of constants $\tilde{\bf c}\in \mathbb{C}^3$.
\end{ex}

\section{Differential  resultants and spectral curves}\label{sec-Differential Resultant}


A polynomial $f(\lambda,\mu)$ with constant coefficients satisfied by a commuting pair of differential operators $P$ and $Q$ is called a {\it Burchnall-Chaundy  (BC) polynomial} of $P$ and $Q$, since the first result of this sort appeared is the $1923$ paper \cite{BC} by Burchnall and Chaundy. Therefore, associated to the centralizer of a Boussinesq operator $L$ there are as many BC polynomials as operators in the centralizer. By E. Previato's Theorem \cite{MRZ1}, BC polynomials can by computed using differential resultants, and we show next how to compute them for a Boussinesq operator $L$. 

Given a differential operator $A\in \cC_K(L)$ of order $m$, the {\it differential resultant} of $L-\lambda$ and $A-\mu$ equals
$$\dres(L-\lambda,A-\mu):=\det (S_0(L-\lambda,A-\mu)),$$
where the Sylvester matrix $S_0(L-\lambda,A-\mu)$ is the coefficient matrix of the extended system of differential operators
\[\Xi_0=\{\partial^{m-1}(L-\lambda),\ldots \partial(L-\lambda), L-\lambda, \partial^2(A-\mu),\partial (A-\mu), A-\mu\}.\]
Observe that $S_0(L-\lambda,A-\mu)$ is a squared matrix of size $3+m$ and entries in $K[\lambda,\mu]$. See \cite{MRZ1}, Section 3.2.1 for the definition and main properties of differential resultants and \cite{MRZ1}, Section 5.2 for a proof of Previato's Theorem, from which the next result immediately follows.

\begin{thm}
Let $L$ be a Boussinesq operator and let us consider a differential operator $A$ in its centralizer $\cC_K (L)$. The BC polynomial of $L$ and $A$ equals
\begin{equation}
    f(\lambda,\mu)=\dres(L-\lambda,A-\mu)\in C[\lambda,\mu].
\end{equation}
\end{thm}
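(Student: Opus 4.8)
The plan is to invoke Previato's Theorem, which the excerpt tells us is proved in \cite{MRZ1}, Section 5.2, and whose statement identifies the Burchnall--Chaundy polynomial of a commuting pair with the differential resultant of the corresponding affine shifts. Since $L$ is a Boussinesq operator, Theorem \ref{thm-centralizer} guarantees that $\cC_K(L)$ is a nontrivial commutative domain, so any $A\in\cC_K(L)$ commutes with $L$ and the pair $(L,A)$ is exactly the kind of commuting pair to which Previato's result applies. First I would recall that the centralizer is commutative (by \cite{Good}, Theorem 4.2, as cited before Theorem \ref{thm-centralizer}), hence $L-\lambda$ and $A-\mu$ commute in $K[\partial][\lambda,\mu]$ and share, upon specialization, a common eigenfunction; this is the algebraic content that forces a nontrivial algebraic relation between $L$ and $A$.

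Next I would set up the Sylvester matrix $S_0(L-\lambda,A-\mu)$ precisely as in the paragraph preceding the statement: it is the $(3+m)\times(3+m)$ coefficient matrix of the extended system $\Xi_0$, with entries in $K[\lambda,\mu]$, and its determinant is by definition $\dres(L-\lambda,A-\mu)$. The key step is to verify the two properties that make this determinant the BC polynomial: (i) that $\dres(L-\lambda,A-\mu)$ actually lies in $\bbC[\lambda,\mu]$ and not merely in $K[\lambda,\mu]$, and (ii) that it is (up to a nonzero constant) the generator of the ideal of algebraic relations satisfied by the pair $(L,A)$, equivalently that $\dres(L-\lambda, A-\mu)$ vanishes precisely when $L-\lambda$ and $A-\mu$ have a common factor. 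Property (i) follows because the differential resultant of two operators in the centralizer is a first integral: applying $\partial$ to the resultant and using that $[L,A]=0$ shows the derivative vanishes, so the resultant is a constant in each of its coefficients; this is the standard argument underlying Previato's Theorem and is recorded in \cite{MRZ1}. Property (ii) is the Burchnall--Chaundy correspondence itself, again as packaged in Previato's Theorem.

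Concretely, the proof reduces to: (1) cite \cite{Good}/Theorem \ref{thm-centralizer} to know $A$ commutes with $L$; (2) cite Previato's Theorem from \cite{MRZ1}, Section 5.2, which states that for a commuting pair the BC polynomial equals the differential resultant of the affine shifts; (3) observe that the hypotheses of Previato's Theorem are met here, so $f(\lambda,\mu)=\dres(L-\lambda,A-\mu)\in\bbC[\lambda,\mu]$, which is exactly the claimed equality. No genuinely new computation is needed; the statement is essentially a specialization of the general theorem to the Boussinesq setting, made explicit so that it can be used later in Theorem \ref{thm-spectral curve} for the three pairwise resultants $f_1,f_2,f_3$.

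The main obstacle, such as it is, is bookkeeping rather than conceptual: one must make sure the size of the Sylvester matrix and the precise shape of the extended system $\Xi_0$ match the convention in which Previato's Theorem is stated in \cite{MRZ1} (in particular that $L-\lambda$ contributes $m$ rows and $A-\mu$ contributes $3$ rows, giving the $(3+m)$-square matrix), and that the constant-coefficient conclusion is quoted in the form needed. Since the excerpt explicitly grants us \cite{MRZ1}, Section 3.2.1 for the properties of differential resultants and Section 5.2 for Previato's Theorem, the write-up can be short: state that the result follows immediately from Previato's Theorem applied to the commuting pair $L$, $A$ in $\cC_K(L)$.
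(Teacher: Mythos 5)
Your proposal matches the paper's own treatment: the paper states that this result ``immediately follows'' from Previato's Theorem as proved in \cite{MRZ1}, Section 5.2, applied to the commuting pair $L$, $A$ in the commutative domain $\cC_K(L)$, which is exactly your steps (1)--(3). The extra detail you sketch about why the resultant has constant coefficients is not needed here since it is part of the cited theorem, but it does no harm.
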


Observe that the operators $L-\lambda$ and $A-\mu$ have coefficients in the differential ring $(K[\lambda,\mu],\partial)$ and,
by means of the differential resultant, it is ensured that we compute a nonzero polynomial
\begin{equation}\label{eq-dresnonzero}
\dres(L-\lambda,A-\mu)=a_3^m\mu^3-b_m^3\lambda+\cdots
\end{equation}
in the elimination ideal $(L-\lambda,A-\mu)\cap K[\lambda,\mu]$. Previato's \
Theorem implies that 
\[\dres(L-\lambda,A-\mu)\in (P-\lambda,Q-\mu)\cap C[\lambda,\mu].\]


We consider next the centralizer of $L$ as established in Theorem \ref{thm-centralizer}
$\cC_K (L)= \mathbb{C}[L,A_1 ,A_2 ]$.
Hence, applying Previato's Theorem and \eqref{eq-dresnonzero} pairwise in $\left\{L , A_{1} ,  A_{2}  \right\} $, with $\ord(A_1)=3n_1 +1$ and $\ord(A_2)=3n_2 +2$, we obtain the BC polynomials in $\bbC[\lambda,\mu,\gamma]$
\begin{align}\label{defi-f1-f2-f3}
&f_1(\lambda,\mu,\gamma)=\dres(L-\lambda,A_{1}-\mu)=\mu^3-\lambda^{3n_1 +1}+\cdots , \\
&f_2(\lambda,\mu,\gamma)=\dres(L-\lambda,A_{2}-\gamma)=\gamma^3-\lambda^{3n_2 +2}+\cdots, \\
&f_3(\lambda,\mu,\gamma)=\dres(A_{1}-\mu, A_{2}-\gamma)=\mu^{3n_2 +2}-\gamma^{3n_1 +1}+\cdots.
\end{align}

Observe that $f_i$, $i=1,2$ are irreducible polynomials, since $L$ and $A_{i}$ have coprime orders. The orders of $A_1$ and $A_2$ may not be coprime and the structure of the centralizer together with the nature of the spectral curve depend on this question. 

\begin{defi}\label{defi-Bsq-ideal}
Let $L=\partial^3+\tilde{u}_1\partial+\tilde{u}_0$ in $K[\partial]$ be a Boussinesq operator.
We define {\sl the ideal associated to $L$} to be the ideal in $\mathbb{C}[\lambda,\mu,\gamma]$ generated by the set $\{ f_1 , f_2 , f_3 \} $, with $f_i$ defined in \eqref{defi-f1-f2-f3}. We denote this ideal by $I(L)$. We call $L$ a {\sl geometrically reducible Boussinesq operator}, if its associated ideal $I (L)$ is a prime ideal in $\mathbb{C}[\lambda,\mu,\gamma]$.
\end{defi}

By Schur's Theorem \cite{Sch}, the quotient field of the centralizer is a function field in one variable, therefore it is the affine ring of a curve, the spectral curve $\Gamma$. 
{In the next theorem we explicitly define $\Gamma$ and the appropriate isomorphism.} 

\begin{thm}
\label{thm-spectral curve}
Let $L$ be a geometrically reducible Boussinesq operator. 
Then, the centralizer of \ $L$ in $K[\partial]$ is isomorphic to the ring of the affine algebraic curve $\Gamma$ in $\bbC^3$ defined by the prime ideal $I=I(L)$ associated to $L$. More precisely, we have a ring isomorphism
\begin{equation}\label{eq-isomorfismo}
    \cC_K(L)   \simeq {\bbC [\lambda, \mu ,\gamma]}/{I} \ .
\end{equation}
\end{thm}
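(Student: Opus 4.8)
The plan is to construct an explicit surjective ring homomorphism
$\psi\colon \bbC[\lambda,\mu,\gamma]\to \cC_K(L)$ and identify its kernel with $I=I(L)$. The natural candidate is the evaluation map $\psi(\lambda)=L$, $\psi(\mu)=A_1$, $\psi(\gamma)=A_2$; this is well defined because $L$, $A_1$, $A_2$ pairwise commute (they all lie in the commutative ring $\cC_K(L)$, by Goodearl's Theorem~4.2 of \cite{Good}), and it is surjective precisely by Theorem~\ref{thm-centralizer}, which says $\cC_K(L)=\bbC[L,A_1,A_2]$. Writing $J=\ker\psi$, we therefore get $\cC_K(L)\simeq \bbC[\lambda,\mu,\gamma]/J$, and the whole theorem reduces to proving $J=I$.

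Next I would prove the inclusion $I\subseteq J$. This is the easy direction: each generator $f_k$ of $I$ is a BC polynomial of a commuting pair built from $\{L,A_1,A_2\}$, and by Previato's Theorem (quoted in the excerpt) $f_1(\lambda,\mu,\gamma)=\dres(L-\lambda,A_1-\mu)$ lies in the elimination ideal $(L-\lambda,A_1-\mu)\cap K[\lambda,\mu]$, hence $f_1(L,A_1,A_2)=0$; similarly for $f_2$ and $f_3$. Thus $f_k\in J$ for $k=1,2,3$, so $I\subseteq J$. Consequently $\psi$ factors through $\bbC[\lambda,\mu,\gamma]/I$, giving a surjection $\bar\psi\colon \bbC[\lambda,\mu,\gamma]/I \twoheadrightarrow \cC_K(L)$, and it remains to show $\bar\psi$ is injective, equivalently $J\subseteq I$, equivalently $J=I$.

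The main obstacle is the reverse inclusion $J\subseteq I$, and this is exactly where the hypothesis that $L$ be \emph{geometrically reducible} (i.e. $I$ prime) must be used. I would argue by a dimension/transcendence-degree count. On one side, $\cC_K(L)\simeq\bbC[\lambda,\mu,\gamma]/J$ is, by Schur's Theorem \cite{Sch}, an integral domain whose fraction field is a function field of transcendence degree $1$ over $\bbC$; hence $J$ is a prime ideal with $\dim \bbC[\lambda,\mu,\gamma]/J = 1$. On the other side, $I$ is prime by hypothesis and $I\subseteq J$; moreover $I$ already has dimension $1$, because $f_1$ alone (being, up to sign, monic of degree $3n_1+1$ in $\lambda$ with coefficients polynomial in $\mu$ by \eqref{defi-f1-f2-f3}) makes $\bbC[\lambda,\mu,\gamma]/(f_1)$ at most $2$-dimensional, and adjoining $f_2$, which involves $\gamma$ nontrivially ($\gamma^3-\lambda^{3n_2+2}+\cdots$), cuts the dimension down to $1$; one checks no component of $V(f_1,f_2)$ is lost in $V(f_1,f_2,f_3)$ because the three BC polynomials are simultaneously satisfied along the genuine spectral curve, so $\dim \bbC[\lambda,\mu,\gamma]/I = 1$. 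Now two prime ideals $I\subseteq J$ in a Noetherian domain with $\dim(\bbC[\lambda,\mu,\gamma]/I)=\dim(\bbC[\lambda,\mu,\gamma]/J)=1$ must coincide: a strict inclusion $I\subsetneq J$ of primes would force $\dim(\bbC[\lambda,\mu,\gamma]/J)<\dim(\bbC[\lambda,\mu,\gamma]/I)$. Hence $I=J$, and $\bar\psi$ is the desired isomorphism.

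I expect the delicate bookkeeping to be the dimension claim $\dim\bbC[\lambda,\mu,\gamma]/I=1$ — in particular, verifying that passing from $(f_1,f_2)$ to $(f_1,f_2,f_3)$ does not drop to dimension $0$ and does not introduce the need for $f_3$ to cut out an extra component, which is why the geometrically-reducible hypothesis (primeness of $I$) is packaged into the statement rather than derived. An alternative to the pure dimension argument, if one wants to avoid discussing $V(f_1,f_2)$ directly, is to invoke that $\psi$ restricted to $\bbC[\lambda]$ is already injective (as $\bbC[L]\subseteq\cC_K(L)$ is a polynomial ring, $L$ being transcendental over $\bbC$), so $J\cap\bbC[\lambda]=0$; then $\bbC[\lambda,\mu,\gamma]/J$ is a finite module over $\bbC[\lambda]$ of rank $3$ (the rank of the free $\bbC[L]$-module $\cC_K(L)$, by Theorem~\ref{thm-centralizer}), which pins its dimension to $1$, and one compares with the same computation for $I$ using that $f_1$ is monic of degree $3$ in $\mu$ over $\bbC[\lambda]$ up to the leading-coefficient subtleties of \eqref{eq-dresnonzero}. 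Either way, once both quotients are known to be $1$-dimensional prime with $I\subseteq J$, the conclusion $I=J$ is immediate and \eqref{eq-isomorfismo} follows.
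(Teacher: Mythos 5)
Your proposal is correct and follows essentially the same route as the paper: the same surjection $\psi(\lambda)=L$, $\psi(\mu)=A_1$, $\psi(\gamma)=A_2$, the observation that its kernel is a prime containing $I$, and a height/dimension comparison (the paper phrases it as a chain of primes $(0)\subset(f_1)\subset I\subset\textfrak{p}\subset\textfrak{m}$ that would violate $\dim\bbC[\lambda,\mu,\gamma]=3$) forcing $\ker\psi=I$. Your version is if anything slightly more complete, since you justify $I\subseteq\ker\psi$ via Previato's Theorem and note that the lower bound $\dim\bbC[\lambda,\mu,\gamma]/I\geq 1$ already follows from $I\subseteq\ker\psi$, so the worry about components of $V(f_1,f_2,f_3)$ is not actually needed.
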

\begin{proof} Let us consider the centralizer
$\cC_K (L)= \mathbb{C}[L,A_1 ,A_2 ]$.
By \cite{Eisenbud-AlgCom}, page $286$ Theorem A,
\begin{equation}\label{dim-centralizer}
   \dim\left(
    \cC_K (L )
    \right) =\textrm{tr. deg}_\bbC  Fr(\cC_K (L ))=1 ,
\end{equation}
and the length of every maximal chain of primes in $\cC_K (L ) $ is $1$.

We define the  homomorphism $ \psi: \bbC [\lambda ,\mu ,\gamma] \rightarrow \cC_K (L )$ by
\begin{equation}
   \psi (\lambda )=L \ , \quad  \psi (\mu )=A_{1} \ , \quad \psi (\gamma )=A_{2} \ .
\end{equation}
This map $\psi$ is a surjection, by Theorem \ref{thm-centralizer}. Its kernel $\textfrak{p}=\Ker \psi$ is a  prime ideal containing $I$. Then, assuming they are distinct ideals, the ideal
$\textfrak{p}/I$ is a non zero ideal of $\bbC [\lambda ,\mu ,\gamma] /I $ of height $1$, since the Krull dimension of $\cC_K (L ) $ is $1$, by \eqref{dim-centralizer}. 

Next, we can consider the chain of prime ideals
\[
(0)\subset (f_1 ) \subset I\subset \textfrak{p} \subset \textfrak{m} \ , \textrm{ for some prime } \textfrak{m} .
\]
Therefore, the Krull dimension of $\bbC [\lambda ,\mu ,\gamma] $  would be  $4$, which is is a contradiction. In consequence,  $\textfrak{p}=I$, 
and thus the required ring isomorphim \eqref{eq-isomorfismo} is obtained. 
\end{proof}

We would like to point out that in previous works \cite{DGU},  the algebraic relations considered on the operators $L$ and $A_i$ (BC polynomials) were only those given by $f_1$ or $f_2$. The novelty of
this work is based on considering the BC polynomial $f_3$ to assign a one dimensional domain, canonically associated with the operator $L$. After Theorem \ref{thm-spectral curve}, this domain is the centralizer of $L$, whose spectrum as a ring is the {\sl spectral curve} $ \Gamma $ of $L$.

\begin{rem}
\begin{enumerate}
\item Regarding the geometrically reducible hypothesis on $L$. Observe that assuming that the ideal $I(L)$ is prime implies that the order of $L$, $A_1$ and $A_2$ are coprime. Example \ref{ex-spectral-curve-Bsq} shows the need of this hypothesis.
\item Observe that, if a non constant coefficient operator $A_2$ of order $2$ belongs to $\cC_k(L)$ then $\cC_K(L)=\cC_K(A_2)=\bbC[L,A_2]$, which is isomorphic to the ring of the plane algebraic curve defined by $f_2$. In this case the  operator of minimal order $3n_1+1$ in $\cC_K(L)$ is $A_2^2$, implying that $f_3=(\mu-\gamma^2)^2$ and therefore $I(L)$ is not prime.
\end{enumerate}
\end{rem}

The following example shows a particular case that illustrates the need of the hypotheses in Theorem \ref{thm-spectral curve}.

\begin{ex}\label{ex-spectral-curve-Bsq}
In Example \ref{ex-L3-first}, we  computed the centralizer of the Boussinesq operator  
\begin{equation*}
     L =\partial^3 -\frac{15}{x^{2}} \partial +\frac{15}{x^{3}}+h \ .
\end{equation*}
We compute the generators of the ideal $I(L)=(f_1,f_2,f_3)$ associated to $L$, using differential resultants, to obtain
\begin{align*}
f_1 =&-\mu^3+(\lambda -h)^4,
f_2=-\gamma^3 + (\lambda-h)^8, 
f_3 = (\gamma-\mu^2)^4. \
\end{align*}
The ideal $I(L)$ is not a prime, since $f_3$ is not square free. Moreover, since $f_3$ is the BC polynomial of $A_1$ and $A_2$ we have that $A_2=A_1^2$, implying that the centralizer is the ring $\cC_K(L)=\bbC[L,A_1]$. Thus, in this case the centralizer is isomorphic to the ring $\bbC[\lambda,\mu]/(f_1)$ of the plane curve defined by $f_1=0$. 
\end{ex}

\section{Factoring Boussinesq operators}\label{sec-factoring-Bsq}

In this section we  consider a geometrically  reducible Boussinesq operator as in Definition \ref{defi-Bsq-ideal},
\begin{equation}\label{eq-L-geom-irred}
     L =\partial^3 +\tilde{u}_1 \partial +\tilde{u}_0 ,
\end{equation}
whose coefficients, $\tilde{u}_0 , \tilde{u}_1$, are  solutions of a stationary Boussinesq system. Then, by Theorem \ref{thm-centralizer} there exist operators $A_1$ and $A_2$, generators of the centralizer $\cC_K (L)=\bbC[L,A_1,A_2]$. Moreover, the spectral curve $\Gamma$ of $L$ defines a field, the fraction field  $K(\Gamma)$ of the domain $K[\lambda, \mu, \gamma ]/I(L)$, with $I(L)=(f_1 ,f_2 ,f_3 )$ the defining ideal of $\Gamma$.

The goal of this section is to factor $L-\lambda_0$, with $\lambda_0\in\bbC$, as an operator with coefficients in $K$, for almost every point $P_0 = (\lambda_0 , \mu_0 , \gamma_0 )$ of the spectral curve $\Gamma$. To achieve this goal we will use differential subresultants, using similar methods to the ones developed in a recent work for algebro-geometric Schr\"odinger operators in \cite{MRZ1} and \cite{MRZ2}.

The differential resultant of two differential operators is a condition on their coefficients that guaranties a right common factor, see \cite{MRZ1}, Proposition 3.4 (2). {Since $L$ geometrically reducible implies $\gcd(\ord(L),\ord(A_1),\ord(A_2))=1$}, as polynomials in $K[\lambda,\mu,\gamma]$, the differential resultants $f_i$, $i=1,2,3$ defined in \eqref{defi-f1-f2-f3} are nonzero irreducible polynomials in $\bbC[\lambda,\mu,\gamma]$. Given $P_0=(\lambda_0,\mu_0,\gamma_0)$ in $\Gamma$, then $L-\lambda_0$, $A_1-\mu_0$ and $A_2-\gamma_0$ are operators in $K[\partial]$. Since $f_i(P_0)=0$, then the differential resultants of the pairs $\{L-\lambda_0 ,A_1-\mu_0 \}$, $\{L-\lambda_0 ,A_2-\gamma_0 \}$ and $\{A_1-\mu_0 ,A_2-\gamma_0 \}$ are zero, implying they have nonzero right common factors in $K[\partial]$. We will compute these factors using differential subresultants.

Given $A\in\cC_K (L)$, to define the first differential subresultant of $L-\lambda$ and $A-\mu$, with $\ord(A)=m$, we need the coefficient matrix of the extended system of differential operators
\[\Xi_1=\{\partial^{m-2}(L-\lambda),\ldots \partial(L-\lambda), L-\lambda, \partial (A-\mu), A-\mu\}.\]
Observe that its coefficitent matrix $S_1(L-\lambda,A-\mu)$ is a matrix with $m+1$ rows, $m+2$ columns and entries in $K$.
The first differential subresultant of $L-\lambda$ and $A-\mu$ is the differential operator
\begin{equation}
\cL_1=\det(S_1^0)+\det(S_1^1)\partial    
\end{equation}
where
\begin{equation}
S_1^0:=      \textrm{submatrix}(S_1,\hat{\partial})  , 
\mbox{ and }S_1^1:=\textrm{submatrix}(S_1,\hat{1})  
\end{equation}
are the submatrices of $S_1 =S_1 (L-\lambda,A-\mu)$ obtained by removing columns indexed by $\partial$ and $1$ respectively.

For $j=0,1$, let us define the polynomials in $K[\lambda,\mu,\gamma]$
\begin{align}\label{eq-phiij}
    \phi_{1,j}(\lambda,\mu,\gamma):=&\det(S_1^j (L-\lambda ,A_1-\mu )),\\
    \phi_{2,j}(\lambda,\mu,\gamma):=&\det(S_1^j (L-\lambda ,A_2-\gamma )),\\
    \phi_{3,j}(\lambda,\mu,\gamma):=&\det(S_1^j (A_1-\mu ,A_2-\gamma )).
\end{align}
and the rational functions
\begin{equation}\label{eq-phiis}
    \phi_i(\lambda,\mu,\gamma)=\frac{\phi_{i,0}(\lambda,\mu,\gamma)}{\phi_{i,1}(\lambda,\mu,\gamma)},\,\,\, i=1,2,3.
\end{equation}
Let us consider the finite set 
\begin{equation}\label{eq-Z}
    Z=\Sing(\Gamma)\cup\{P_0\in \Gamma\mid \phi_{i,1}(P_0)=0\},
\end{equation}
union of the singular points of $\Gamma$ with the points for which the rational functions $\phi_i$ are not well defined.

\begin{prop} Let $L$ be a geometrically reducible Boussinesq operator in $K[\partial]$. For  every point $P_0 = (\lambda_0 , \mu_0 , \gamma_0 )$ in $\Gamma\backslash Z$, $i=1,2,3$, $j=0,1$, then $\phi_{i,j}(P_0)$ are non zero elements of $K$. 

\end{prop}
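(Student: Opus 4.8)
The plan is to unwind the definitions of $\phi_{i,j}$ and $\phi_i$ and to pin down exactly what could go wrong at a point $P_0$, showing that the set $Z$ has been defined precisely to exclude those pathologies. First I would recall that for each pair among $\{L-\lambda, A_1-\mu, A_2-\gamma\}$ the subresultant matrix $S_1$ has entries in $K$ polynomially depending on $\lambda,\mu,\gamma$ (indeed, the coefficients of $L-\lambda$, $A_1-\mu$, $A_2-\gamma$ and their derivatives are polynomials in the spectral variables with coefficients in $K$), so $\phi_{i,0}$ and $\phi_{i,1}$, being determinants of submatrices of $S_1$, lie in $K[\lambda,\mu,\gamma]$. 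Thus for any point $P_0=(\lambda_0,\mu_0,\gamma_0)\in\bbC^3$ the specializations $\phi_{i,j}(P_0)$ are well-defined elements of $K$; the only issue is whether they vanish.

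Next I would handle $j=1$ directly: by the very definition of $Z$ in \eqref{eq-Z}, $Z$ contains $\{P_0\in\Gamma\mid \phi_{i,1}(P_0)=0\}$ for $i=1,2,3$, so if $P_0\in\Gamma\backslash Z$ then $\phi_{i,1}(P_0)\neq 0$ by construction. Consequently the rational function $\phi_i=\phi_{i,0}/\phi_{i,1}$ is well-defined at such $P_0$, which is the content of the phrase ``points for which $\phi_i$ are not well defined'' in the definition of $Z$. The nontrivial part is then to show $\phi_{i,0}(P_0)\neq 0$ as well.

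For the $j=0$ case the key observation is that $P_0\in\Gamma$ means $f_i(P_0)=0$ for all $i$, so each pair of specialized operators, e.g. $L-\lambda_0$ and $A_1-\mu_0$, has a nontrivial greatest common right divisor in $K[\partial]$ by the resultant-vanishing criterion (\cite{MRZ1}, Proposition 3.4). Since $P_0\notin\Sing(\Gamma)$, $P_0$ is a smooth point of the spectral curve, and the standard theory of differential subresultants (as in \cite{MRZ1}) ensures that at a point where the resultant vanishes but the first subresultant is ``generically'' rank-one the gcrd is exactly first order, given by $\partial+\phi_{i,0}/\phi_{i,1}$; the condition $\phi_{i,1}(P_0)\neq 0$ says the leading coefficient of this first-order gcrd does not degenerate, and smoothness of $\Gamma$ at $P_0$ rules out the gcrd jumping to order $\geq 2$. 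If the gcrd has order exactly one and leading coefficient $1$ after normalization, its constant term $\phi_i(P_0)=\phi_{i,0}(P_0)/\phi_{i,1}(P_0)$ must be a genuine element of $K$; should $\phi_{i,0}(P_0)=0$, the gcrd would be the trivial operator $\partial$, contradicting that $\partial+\phi_i(P_0)$ is a common right factor of $L-\lambda_0$ (whose trailing coefficient $\tilde u_0-\lambda_0$ is nonzero as an element of $K$, since $\tilde u_0\notin\bbC$ forces $\tilde u_0-\lambda_0\neq 0$). More simply, $\partial$ cannot right-divide $A_1-\mu_0$ unless the trailing coefficient of $A_1$ is constant, which fails generically; a clean way to phrase this is that a first-order monic right factor $\partial+\phi$ of $L-\lambda_0$ must satisfy the Riccati-type relation forcing $\phi\in K$, and $\phi=0$ would make $\partial$ divide $L-\lambda_0$, i.e. $\tilde u_0=\lambda_0\in\bbC$, contrary to $L$ being a genuine (nonconstant-coefficient) Boussinesq operator.

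The main obstacle I expect is the precise argument that at a smooth point $P_0$ the gcrd computed by the first subresultant has order \emph{exactly} one (neither zero nor higher) and that $\phi_{i,0}(P_0)$ therefore cannot vanish: this requires carefully invoking the subresultant-gcrd correspondence and the fact that on $\Gamma\backslash\Sing(\Gamma)$ the rank of $S_1(P_0)$ is maximal, i.e. $m$, which is exactly equivalent to $\phi_{i,1}(P_0)\neq 0$ together with the structure of $\Gamma$ near a smooth point. I would lean on the properties of differential subresultants established in \cite{MRZ1}, Section 3.2, to package this, and close by remarking that the excluded singular points in $\Sing(\Gamma)\subseteq Z$ are precisely where higher-order or degenerate factorizations can occur — the ``pathological factorizations'' alluded to in the introduction.
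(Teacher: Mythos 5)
Your argument splits into a trivial half and a substantive half, and the substantive half has a genuine gap. For $j=1$ the claim is, as you say, immediate from the definition of $Z$ in \eqref{eq-Z}. For $j=0$, however, your route through the subresultant--gcrd correspondence does not close. You reduce to excluding that the common right divisor at $P_0$ is $\partial$ itself, and for the pairs involving $L$ you derive $\tilde u_0=\lambda_0\in\bbC$ and declare this contradictory because ``$\tilde u_0\notin\bbC$''; but Definition~\ref{defi-Bsq-operators} does not exclude constant $\tilde u_0$, so this is an unestablished hypothesis, and for $i=3$ (the pair $A_1-\mu_0$, $A_2-\gamma_0$) you only assert that a constant trailing coefficient of $A_1$ ``fails generically'', which is not an argument. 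Your appeal to smoothness of $\Gamma$ at $P_0$ is also superfluous: once $\phi_{i,1}(P_0)\neq 0$, the specialized first subresultant is a nonzero first-order operator, which already pins the order of the gcrd at exactly one without any reference to $\Sing(\Gamma)$.

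The paper's proof is entirely different and much shorter: it is a uniform degree argument covering $j=0$ and $j=1$ at once, with no divisibility analysis. Each $f_i$ is irreducible, and for $i=1,2$ it has degree $3$ in $\mu$ (resp.\ $\gamma$), whereas by construction of the matrices $S_1^j$ the polynomial $\phi_{i,j}$ has degree at most $2$ in that variable and involves only the two spectral variables of the corresponding pair; an analogous count applies to $i=3$. Hence $\phi_{i,j}$ cannot belong to $I(L)$, so it does not vanish identically on the irreducible curve $\Gamma$, and its zero locus on $\Gamma$ is a finite set, which the paper absorbs into $Z$. (Strictly, \eqref{eq-Z} lists only the zeros of $\phi_{i,1}$, so the $j=0$ conclusion rests on reading $Z$ as containing the zeros of all the $\phi_{i,j}$.) If you want to salvage your approach you would either have to add and justify the hypothesis that $\tilde u_0$ and the trailing coefficients of $A_1$, $A_2$ are nonconstant, or fall back on proving that $\phi_{i,0}$ is not identically zero on $\Gamma$ --- which is exactly what the degree bound delivers for free.
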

\begin{proof}
Recall that $\gcd(\ord(L),\ord(A_i))=1$ guaranties that $f_i$ are irreducible polynomials in $K[\lambda,\mu,\gamma]$. Thus the degree in $\lambda$ of $f_i$ is $\ord(A_i)$ and the degree in $\mu$ (or $\gamma$) is $3$. By the construction of the matrices $S_1^j$, the degree in $\mu$ (or $\gamma$) of $\phi_{i,j}(\lambda,\mu,\gamma)$, $i=1,2$ is less than or equal to $2$. Therefore $\phi_{i,j}$ does not belong to $I(L)$. Thus $\phi_{i,j}$ could only be zero for a finite number of points $P_0$ in $Z$.
\end{proof}

\para 

Let $L$ be a geometrically  reducible Boussinesq operator in $K [\partial]$.
We  give next the result that allows factoring the operator $L-\lambda_0$ for almost every point $P_0 = (\lambda_0 , \mu_0 , \gamma_0 )$ in $\Gamma$.

Let $P_0$ be a point of $\Gamma\backslash Z$. With the previous notation, we compute the
following (monic) greatest common (right) divisors, {evaluating \eqref{eq-phiis} in $P_0$}:
\begin{align}
 \cL_{1}=&\gcd \left( L-\lambda_0 , A_{1}-\mu_0 \right)= \partial +\phi_1 (P_0 ), \label{gcd-12} \\
 \cL_{2}=&\gcd \left( L-\lambda_0 , A_{2}-\gamma_0 \right)= \partial +\phi_2 (P_0 ), \label{gcd-13}\\
 \cL_{3}=&\gcd \left( A_{1}-\mu_0 , A_{2}-\gamma_0 \right)= \partial +\phi_3 (P_0 ) \label{gcd-23}.
\end{align}
Then, $\partial+\phi_i (P_0 )\in K[\partial ]$ and we obtain the equalities:
\[
L-\lambda_0 = N_i \cdot (\partial+\phi_i (P_0 )), \ i=1,2
\]
for second order differential operators $N_i$. 

\medskip

Let 
$\Sigma$ be a Picard-Vessiot field (with field of constants $\mathbb{C}$) for the differential operator $L-\lambda_0$. Since $\partial+\phi_i (P_0)$ is a divisor of $L-\lambda_0$, $i=1,2$ then (see for instance \cite{VPS}, page 15), there exists $(a_1 , a_2 )\in \Sigma^2$ satisfying the  system $\{F_1=0,F_2=0\}$ for differential polynomials 
\begin{equation*}
    F_1 = y'_{1} +\phi_1 (P_0 ) y_1 , \ F_2 = y'_{2} +\phi_2 (P_0 )y_2 ,
\end{equation*}
in the differential field $(K\{y_1 , y_2 \} , ')$ with field of constants  $\mathbb{C}$. Then, by the Weak differential Nullstellensatz, \cite{GUSTAVSON2016} 1142 
, $1$ is not in the differential ideal generated by $F_1$ and $F_2$. Therefore, since they are monic first order polynomials, we conclude that $\phi_1 (P_0 ) = \phi_2 (P_0 )$. But, because of \eqref{gcd-23}, the following equalities hold
\begin{equation}\label{eq-factor comun}
\partial+\phi_1 (P_0 )= \partial+\phi_2 (P_0 )=\partial+\phi_3  (P_0 )   \ ,
\end{equation}
 and \  $\partial+\phi (P_0 )=\partial +\phi_i (P_0 )$, \ {is the greatest common right divisor of  $L-\lambda_0$, $A_1-\mu_0$ and $A_2-\gamma_0$. }

We summarize the previous construction in the following proposition.

\begin{prop}\label{prop-factor-at-P0}
Let $L$ be a geometrically reducible Boussinesq operator, and $\Gamma $ its spectral curve. There exists a rational function $\phi\in K(\Gamma)$ such that, for  every point $P_0 = (\lambda_0 , \mu_0 , \gamma_0 )$ in $\Gamma\backslash Z$, the operator $L-\lambda_0$ has as right factor 
$\partial+\phi (P_0 )=\gcd\left(L-\lambda_0,A_1-\mu_0,A_2-\gamma_0\right)$.
\end{prop}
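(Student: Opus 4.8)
The statement essentially bundles together the three pairwise gcd computations \eqref{gcd-12}--\eqref{gcd-23} and the argument \eqref{eq-factor comun} showing they coincide, packaging the common value as a single rational function on the spectral curve. My plan is to make this packaging precise. First I would fix $P_0 \in \Gamma\setminus Z$ and recall from the preceding proposition that the six polynomials $\phi_{i,j}(P_0)$, $i=1,2,3$, $j=0,1$, are nonzero elements of $K$, so the three rational functions $\phi_i$ of \eqref{eq-phiis} are well defined at $P_0$ and $\partial+\phi_i(P_0)\in K[\partial]$. Then I would invoke the basic property of the first differential subresultant (cf.\ \cite{MRZ1}, Section 3.2): whenever the differential resultant of two operators $B-\alpha$, $C-\beta$ vanishes at a specialization and the first subresultant leading coefficient $\phi_{i,1}$ does not, the monic operator $\partial+\phi_{i,j}(P_0)/\phi_{i,1}(P_0)$ divides both $B-\alpha$ and $C-\beta$ on the right, and in fact is their gcd since the resultant of orders $(\ord, 1)$-type forces the common factor to have order exactly one. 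Applying this to the three pairs drawn from $\{L-\lambda_0, A_1-\mu_0, A_2-\gamma_0\}$ gives \eqref{gcd-12}, \eqref{gcd-13}, \eqref{gcd-23}.

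Next I would carry out the coincidence argument. From \eqref{gcd-12} and \eqref{gcd-13}, both $\partial+\phi_1(P_0)$ and $\partial+\phi_2(P_0)$ divide $L-\lambda_0$ on the right. Following the text, I would pass to a Picard--Vessiot field $\Sigma$ for $L-\lambda_0$, where each first-order right factor $\partial+\phi_i(P_0)$ corresponds to a solution $a_i$ of the linear ODE $y' + \phi_i(P_0) y = 0$. If $\phi_1(P_0)\neq\phi_2(P_0)$, then the two first-order differential polynomials $F_1 = y_1'+\phi_1(P_0)y_1$, $F_2 = y_2'+\phi_2(P_0)y_2$ would generate the unit ideal in the differential polynomial ring over $\Sigma$ (this is where the Weak Differential Nullstellensatz of \cite{GUSTAVSON2016} enters: a consistent system cannot have $1$ in its differential ideal), contradicting the existence of the common solution structure; hence $\phi_1(P_0)=\phi_2(P_0)$. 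Combined with \eqref{gcd-23} this yields $\partial+\phi_1(P_0)=\partial+\phi_2(P_0)=\partial+\phi_3(P_0)$, and since this operator divides all three of $L-\lambda_0$, $A_1-\mu_0$, $A_2-\gamma_0$ on the right and has order one, it is their common right divisor; maximality (gcd) follows because any common right divisor of $L-\lambda_0$ and $A_1-\mu_0$ already has order at most one by the subresultant computation.

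Finally, to get a single $\phi\in K(\Gamma)$ rather than three a priori different functions, I would argue that the identities $\phi_1(P_0)=\phi_2(P_0)=\phi_3(P_0)$ holding for all $P_0$ in the Zariski-dense set $\Gamma\setminus Z$ force the equality $\phi_1=\phi_2=\phi_3$ as elements of the function field $K(\Gamma)$: indeed $\phi_{i,1}\phi_{j,0}-\phi_{j,1}\phi_{i,0}$ is a polynomial vanishing on $\Gamma\setminus Z$, hence on all of $\Gamma$, hence lies in the prime ideal $I(L)$. So one may set $\phi := \phi_1 \in K(\Gamma)$ and it has the asserted property at every $P_0\in\Gamma\setminus Z$.

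\textbf{Main obstacle.} The routine parts are the subresultant divisibility facts, which are quoted from \cite{MRZ1}. The step that requires the most care is the coincidence $\phi_1(P_0)=\phi_2(P_0)$: one must be sure that two distinct monic first-order right factors of a single third-order operator is genuinely impossible over a field, and that the Picard--Vessiot / Weak Differential Nullstellensatz argument is the right way to see it (equivalently, one could argue directly that if $\partial+\phi_1$ and $\partial+\phi_2$ both right-divide $L-\lambda_0$ with $\phi_1\neq\phi_2$, then $\phi_1-\phi_2$ is a unit times a Wronskian-type expression that cannot vanish, forcing a rank contradiction on the solution space). Ensuring the exceptional locus $Z$ genuinely contains \emph{all} bad points — both where some $\phi_{i,1}$ vanishes and where $\Gamma$ is singular so that "order exactly one" can fail — is the other point to watch, but this is exactly how $Z$ was defined in \eqref{eq-Z}.
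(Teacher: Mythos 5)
Your proposal follows the paper's own route essentially step for step: the pairwise first differential subresultants give the three monic first-order gcds \eqref{gcd-12}--\eqref{gcd-23}, the Picard--Vessiot/Weak Differential Nullstellensatz argument forces $\phi_1(P_0)=\phi_2(P_0)$, and \eqref{gcd-23} then identifies all three, exactly as in the construction preceding the proposition. Your closing observation that $\phi_{i,1}\phi_{j,0}-\phi_{j,1}\phi_{i,0}\in I(L)$, so that the three rational functions coincide as a single element $\phi\in K(\Gamma)$, is a small explicit addition that the paper leaves implicit, but it does not change the argument.
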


Moreover, the following formula can be easily verified in $K[\partial]$:
 \begin{equation}\label{eq-factorization}
    L -\lambda_0 =(\partial^2+\phi (P_0 ) \partial+\phi(P_0 )^2+2\phi(P_0 )'+\tilde{u}_1)(\partial+\phi(P_0 )).
\end{equation}

\begin{defi}
We call \eqref{eq-factorization} a {\sl spectral factorization of $L-\lambda_0$}, which is obtained at almost every point $P_0=(\lambda_0 , \mu_0 , \gamma_0 )$ of the spectral curve $\Gamma$ of $L$, all but a finite number of points in $Z$.
\end{defi}

The above results can be automated. We propose the Spectral Factorization  algorithm, or \SpF \ for to short, (Algorithm \ref{alg-SpF}) to perform the factorization of a geometrically reducible Boussinesq operator $L -\lambda_0$ as in \eqref{eq-L-geom-irred} at a point $P_0 =(\lambda_0 , \mu_0 ,\gamma_0)$ of its associated spectral curve $ \Gamma $. The output is the greatest common right divisor $\partial+\phi(P_0 )$, according to the equality  \eqref{eq-factor comun}, of the generators of the centralizer $ \cC_K (L) $ at the point $P_0 $ of $\Gamma$. 

\medskip

\begin{algorithm}
\caption {Spectral Factorization (\texttt{SpF})}
\label{alg-SpF}
\SetKwInput{KwData}{Input}
\SetKwInput{KwResult}{Output}
\SetAlgoLined 
\KwData{A Boussinesq operator $L$ as in \eqref{eq-L-geom-irred} and $\lambda_0\in \bbC$.} 
\KwResult{A list $ \left[ I(L), P_0=(\lambda_0,\mu_0,\gamma_0), \cL_1 \right]$, with: $I(L)$ the defining ideal of the spectral curve $\Gamma$ of $L$; the coordinates of a point $P_0$ of $\Gamma$; $\cL_1=\partial+\phi(P_0)$ the right $\gcd$ of $\{L-\lambda_0,A_1-\mu_0,A_2-\gamma_0\}$, for generators $A_1,A_2$ of the centralizer $ \cC_K (L)$, that verifies \eqref{eq-factorization}.
}
    Compute $A_{1}$ and $A_{2}$, by means of \Bop operators in \eqref{defi-Bsq-formal-operators}, such that $\cC_K(L)=\bbC[L,A_1,A_2]$.\label{step-A1A2}\\
    Compute $f_1:=\dres(L-\lambda,A_1-\mu)$,  $f_2:=\dres(L-\lambda,A_2-\gamma)$ and  $f_3:=\dres(A_1-\mu,A_2-\gamma)$.\label{step-resultants}\\
    Define $I(L):=( f_1 , f_2 , f_3 )$ the ideal of the spectral curve $\Gamma$ of $L$, by Theorem \ref{thm-spectral curve}. \label{Step-def-ideal}
    \\
   If $I(L)$ is not  prime, then \KwRet{"\textsl{$L$ is not geome\-tri\-cally reducible}"}.
     \\
    Compute pairwise the first differential subresultants of $\{L-\lambda,A_1-\mu,A_2-\gamma\}$ to obtain $\phi_{i,1}(\lambda,\mu,\gamma)\partial +\phi_{i,0}(\lambda,\mu,\gamma)$, $i=1,2,3$, see \eqref{eq-phiij}. \label{step-subresultants}\\
    Compute the finite set $Z=\Sing(\Gamma)\cup\{P_0\in \Gamma\mid \phi_{i,1}(P_0)=0\}$\\
    Compute  $P_0=(\lambda_0,\mu_0,\gamma_0)$ in $\Gamma$. \label{Step-not-in-Z}\\
   {If $P_0$ belongs to $Z$ , then \KwRet{"\textsl{a spectral factorization of $L-\lambda_0$ cannot be obtained}"}.}\\
    Define the rational function $\phi(\lambda,\mu):=\phi_{1,0}/\phi_{1,1}$ and compute $\phi_0:=\phi(P_0)$.\\
    Define the right factor $\cL_1 =\partial+\phi_0$.\\
    \KwRet{ $ \left[ I(L), P_0, \cL_1 \right]$ }. \label{step:3return}
\end{algorithm}

\medskip

The next result guarantees the correctness of the previous algorithm.

\begin{thm}\label{thm-alg-correctness}
The \texttt{SpF} Algoritm \ref{alg-SpF}, with input list $[L, \lambda_0 ]$, where $L=\partial^ 3 + \tilde{u}_1 \partial + \tilde{u}_0$ is a  Boussinesq operator over $K$, and $\lambda_0 \in\bbC$, \ returns  "\textsl{$L$ is not geome\-tri\-cally reducible}", if the defining ideal $I(L)$ of $\Gamma$ computed in Step \ref{Step-def-ideal} is not a prime ideal; if $L$ is geometrically reducible, it returns a list $\left[ I(L), P_0, \cL_1 \right]$, with $P_0=(\lambda_0,\mu_0,\gamma_0)$ a point of \ $\Gamma\backslash Z$, with $Z$ as in \eqref{eq-Z}, and  
\begin{equation*}\label{eq-factor-of-3}
    \cL_1 =\partial+\phi(P_0) = \gcd \left(L-\lambda_0,A_1-\mu_0,A_2-\gamma_0\right) \ ,
\end{equation*}
for centralizer $ \cC_K (L)=\bbC[L,A_1,A_2]$.
{Moreover, at almost every point $P_0$ of \ $\Gamma$ (all but a finite number in $Z$) the spectral factorization  \eqref{eq-factorization} of $L-\lambda_0$ in $K[\partial ]$ is achieved.} 
\end{thm}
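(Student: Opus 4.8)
The plan is to verify that the \SpF\ Algorithm \ref{alg-SpF} terminates and that each of its returned objects is correct, by matching every step of the algorithm to a result already established in the excerpt. The structure of the proof is therefore a walk through Algorithm \ref{alg-SpF}, invoking Theorems \ref{thm-centralizer} and \ref{thm-spectral curve} and Proposition \ref{prop-factor-at-P0} at the appropriate moments.

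First I would justify termination and the computation of the first output component. By Theorem \ref{thm-centralizer}, $L$ being a Boussinesq operator guarantees that $\cC_K(L)=\bbC[L,A_1,A_2]$ is a free $\bbC[L]$-module of rank $3$ with basis $\{1,A_1,A_2\}$, and the \Bop\ operators of \eqref{defi-Bsq-formal-operators} produce such generators $A_1,A_2$; this validates Step \ref{step-A1A2}. By Previato's Theorem (via \cite{MRZ1}) and \eqref{eq-dresnonzero}, the differential resultants computed in Step \ref{step-resultants} are the nonzero BC polynomials $f_1,f_2,f_3$ of \eqref{defi-f1-f2-f3}, so Step \ref{Step-def-ideal} correctly forms $I(L)=(f_1,f_2,f_3)$. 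If $I(L)$ is not prime, the algorithm returns the stated message, which is correct by Definition \ref{defi-Bsq-ideal}. If $I(L)$ is prime, then $L$ is geometrically reducible, and Theorem \ref{thm-spectral curve} identifies $I=I(L)$ as the defining ideal of the spectral curve $\Gamma$ with $\cC_K(L)\simeq \bbC[\lambda,\mu,\gamma]/I$; this is exactly the first entry of the output list.

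Next I would justify the second and third output components. Since $L$ is geometrically reducible we have $\gcd(\ord(L),\ord(A_1),\ord(A_2))=1$, so the $f_i$ are irreducible; the first differential subresultants computed in Step \ref{step-subresultants} yield the polynomials $\phi_{i,j}$ of \eqref{eq-phiij}, and Proposition \ref{prop-factor-at-P0} (together with the preceding Proposition on the non-vanishing of $\phi_{i,j}$ off $Z$) shows that for every $P_0=(\lambda_0,\mu_0,\gamma_0)\in\Gamma\setminus Z$ the rational functions $\phi_i$ of \eqref{eq-phiis} are defined at $P_0$ and satisfy \eqref{eq-factor comun}, i.e. $\phi_1(P_0)=\phi_2(P_0)=\phi_3(P_0)=:\phi(P_0)$, giving the common right divisor $\partial+\phi(P_0)=\gcd(L-\lambda_0,A_1-\mu_0,A_2-\gamma_0)$. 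Thus Steps \ref{Step-not-in-Z} onwards correctly produce $P_0$ and $\cL_1=\partial+\phi_0$ with $\phi_0=\phi(P_0)$; if the chosen $P_0$ lies in $Z$ the algorithm returns the stated message. Finally, the spectral factorization \eqref{eq-factorization} follows from a direct computation in $K[\partial]$: expanding $(\partial^2+\phi_0\partial+\phi_0^2+2\phi_0'+\tilde u_1)(\partial+\phi_0)$ and using that $\partial+\phi_0$ is a right factor of $L-\lambda_0$ forces the quotient to have precisely this form, since $L-\lambda_0$ has leading term $\partial^3$, vanishing $\partial^2$-coefficient, and $\partial$-coefficient $\tilde u_1$.

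The only genuinely delicate point is the passage, within the reasoning behind Proposition \ref{prop-factor-at-P0}, that the three pairwise gcd's actually coincide — this is where the Picard--Vessiot / Weak Differential Nullstellensatz argument enters, ruling out that $\partial+\phi_1(P_0)$ and $\partial+\phi_2(P_0)$ could be distinct first-order right factors of $L-\lambda_0$. Since that argument is already carried out in the excerpt preceding Proposition \ref{prop-factor-at-P0}, here it suffices to cite it; the remainder of the proof of Theorem \ref{thm-alg-correctness} is a bookkeeping verification that the algorithm's branches and outputs match the theoretical statements, plus the elementary identity \eqref{eq-factorization}. I expect the main obstacle in writing this up cleanly to be simply ensuring that the finite exceptional set produced in the algorithm (the computed $Z$) is exactly the set $Z$ of \eqref{eq-Z} for which the theory guarantees success, so that ``$P_0\in\Gamma\setminus Z$'' in the output claim is literally the complement of what the algorithm discards.
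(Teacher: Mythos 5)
Your proposal is correct and follows essentially the same route as the paper, which likewise derives correctness directly from Theorem \ref{thm-spectral curve}, Proposition \ref{prop-factor-at-P0}, and the equality \eqref{eq-factor comun}, with the factorization \eqref{eq-factorization} obtained as an elementary identity in $K[\partial]$. Your version is merely a more detailed step-by-step bookkeeping of the same citations; no new idea is needed or missing.
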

\begin{proof}
The correctness of this algorithm is an immediate consequence of Theorem \ref{thm-spectral curve} and Proposition \ref{prop-factor-at-P0}. Observe that $\cL_1$ is the right factor of Proposition \ref{prop-factor-at-P0}, and by the equality in \eqref{eq-factor comun} it is the greatest common right divisor of $L-\lambda_0 $, $A_1-\mu_0$ and $,A_2-\gamma_0$. Consequently, the formula \eqref{eq-factorization} is obtained.
\end{proof}

\begin{rem}
\begin{enumerate}
    \item Step \ref{step-A1A2}. 
    To compute the generators $ A_1 $ and $ A_2 $ of the centralizer of $ L $, we have implemented the recursion formula \eqref{defi-Bsq-formal-operators}. This formula is linked to the coefficients of the  operator $ L $,  the Boussinesq potentials, and the integration constants associated with them. By means of the stationary Boussinesq system \eqref{eq-Bsq-system} the vectors of integration constants can be previously  computed, as we did to construct the examples contained in this article. 
    
    The effective calculation of $ A_1 $ and $ A_2 $ is an interesting problem. The development of algorithms for their computation is part of {an ongoing project} 
    focused on the study of the centralizers of operators with potentials verifying the Gel'fand-Dickii integrable hierarchies \cite{GD}.
   
    \item Step \ref{Step-not-in-Z}. Calculating a point in $ \Gamma \setminus Z $ can be tricky. It should be noted that for the given value $ \lambda_0 $ the polynomial $ \phi_ {1,1} (\lambda_0, \mu) $ is a polynomial in $ K [ \mu] $ of degree at most $ 2 $ in $\lambda$. Consequently, it has at most two roots in $ \mathbb{C} \subset K $. Therefore, only a finite number of checks should be performed for each $ \mu_0 $ with $ f_1 (\lambda_0, \mu_0) = 0 $. An analogous situation occurs for $ \gamma_0 $ such that $ f_2 (\lambda_0, \gamma_0) = 0 $. Therefore, Step \ref{Step-not-in-Z} can be verified in a finite number of checks.
\end{enumerate}
\end{rem}

\subsection{Computed examples}

There are many examples of rank $1$ operators whose centralizers are the ring of a plane algebraic curve, see references in \cite{PRZ2019}. We show in Example \ref{ex-alabeada} the first explicit example of a centralizer isomorphic to the ring of a non-planar spectral curve. We use  Example \ref{ex-alabeada} to illustrate the performance of the \SpF Algorithm, to obtain the spectral factorization of $L-\lambda_0$, at almost every point (all but a finite number) $P_0 =(\lambda_0 ,\mu_0 ,\gamma_0)$ of the spectral curve $\Gamma$.

We also include Example \ref{ex-factoring-curva-plana}, for which the centralizer is isomorphic to the ring of a plane spectral curve. In this case the \SpF Algorithm cannot be applied but we can still obtain a right factor of $L-\lambda_0$ at every point $P_0 =(\lambda_0 ,\mu_0 ,\gamma_0)$ of the spectral curve $\Gamma$. 

In the next examples we consider the differential field $K=\bbC (x)$ of rational functions with complex coefficients.


\begin{ex}\label{ex-alabeada}
Let us consider the differential operator in $\bbC(x)[\partial]$
\begin{equation}
     L =\partial^3 -\frac{6}{x^{2}} \partial +\frac{12}{x^{3}}+h ,
\end{equation}
obtained from formula \eqref{eq-BsqDGU-L3} with $ q_0 = \frac{6}{x^3}+h$ and $ q_1 = -\frac{6}{x^{2}}$, with $h\in \mathbb{C}$.

We start checking that $L$ is a Boussinesq differential operator and obtaining the vectors of constants for each branch.
The first Boussinesq systems satisfied in each branch $i=1,2$ are:
\[
\sBsq_{1,1}\left( 12/x^3 +h, -6/x^2 ,\tilde{c}_{1,1}\right)=0 , \tilde{c}_{1,1}=\left(-\frac43 h,0\right)
\]
and
\[
\sBsq_{1,2}\left( 12/x^3 +h, -6/x^2 ,\tilde{c}_{2,2}\right)=0, \tilde{c}_{1,2}=\left(0,-\frac53 h,0\right).
\]
Hence, by Lemma \ref{lemma-system}, $[L, P_4 ]=0$ and $[L,P_5 ]=0$ where $P_4$ and $P_5$ are given in  \eqref{defi-Bsq-formal-operators}, substituting $u_0$ by $ \tilde{u}_0=12/x^3 +h $ and $u_1$ by $ \tilde{u}_1= -6/x^2 $, with vectors of constants $\tilde{c}_{1,1}$ and $\tilde{c}_{1,2}$ respectively. 

\medskip

We are ready now to run the \SpF Algorithm. 
\begin{enumerate}
    \item The centralizer of $L$ equals $\bbC[L,A_1,A_2]$ with
    \begin{align*}
    A_1:=&P_4(\tilde{u_0},\tilde{u_1},\tilde{c}_{1,1}) = \partial^4 -\frac{8}{x^2 }\partial^2 +\frac{24}{x^3 }\partial -\frac{24}{x^4},\\
    A_2:=&P_5(\tilde{u_0},\tilde{u_1},\tilde{c}_{1,2}) =  \partial^5 -\frac{10}{x^2 }\partial^3 +\frac{40}{x^3 }\partial^2- \frac{80}{x^4 }\partial+\frac{80}{x^5 }.
    \end{align*}
    
    \item Using differential resultants we compute 
    \[f_1=-\mu^3+(\lambda-h)^4, \ f_2:=-\gamma^3-(h-\lambda)^5, \ f_3=\gamma^4-\mu^5.\]
    
    \item In this case, one can easily verify that $I(L)=(f_1,f_2,f_3)$ is a prime ideal. Moreover, the curve defined by $I(L)$ is a non-planar curve $\Gamma$ parametrized by $$\aleph (\tau)=(\tau^3+h,\tau^4,-\tau^5), \tau\in\bbC.$$ 
    This is the first explicit example of a non-planar spectral curve.
    
    \item The first differential subresultants of $L-\lambda$, $A_1-\mu$ and $A_2-\gamma$ pairwise are equal to $\phi_{i,0}+\phi_{i,1}\partial$, see \eqref{eq-phiij}, with
    \begin{align*}
        \phi_{1,0}=&(h -\lambda)\mu - \frac{4\mu}{x^3} +\frac{8(\lambda-h)}{x^4}, \ 
        \phi_{1,1}=(\lambda-h)^2-\frac{2\mu}{x^2}+4\frac{(\lambda-h)}{x^3},\\
        \phi_{2,0}=&(h -\lambda)^3 - \frac{4(h-\lambda)^2}{x^2} +\frac{8\gamma}{x^4}, \ 
        \phi_{2,1}=(\lambda-h)^3-\frac{4(h-\lambda)^2}{x^2}+\frac{8\gamma}{x^3},\\
        \phi_{3,0}=&-\gamma \left(\mu^2  + \frac{4\gamma}{x^3} -\frac{8\mu}{x^4}\right), \ 
        \phi_{3,1}=\mu^3 - \frac{2\gamma^2}{ x^2} +\frac{ 4\gamma\mu}{x^3}.
    \end{align*}
    \item $Z=\{(h,0,0)\}$ contains only the singular point of this curve.
    \item For every $P_0\in \Gamma\backslash Z$ then $P_0=\aleph(\tau_0)$, with $\tau_0\in\bbC$ and $\tau_0\neq 0$. 
    
    \item Then 
    \begin{equation}
        \phi_0=\phi(P_0)=\phi_i(\aleph(\tau_0))=\frac{-\tau_0^3x^3 + 2\tau_0^2 x^2 - 4\tau_0 x + 4}{(\tau_0^2 x^2 - 2\tau_0 x + 2)x}.
    \end{equation}
    
    \item For every nonzero $\tau_0\in\bbC$ the algorithm returns 
    \[[I(L),\aleph(\tau_0), \partial+\phi_0],\]
    with $L-(\tau_0^3 +h)= \left( \partial_0^2+\phi_0\partial+\phi_0^2+2\phi_0'-\frac{6}{x^2 } \right)\cdot (\partial+\phi_0)$ as in \eqref{eq-factorization}.
\end{enumerate}

\end{ex}

\begin{ex}\label{ex-factoring-curva-plana}
Let us continue with the example in \ref{ex-spectral-curve-Bsq} and \ref{ex-L3-first}.
Although the \SpF Algorithm would return that $ L =\partial^3 -\frac{15}{x^{2}} \partial +\frac{15}{x^{3}} +h$, "\emph{is not geometrically reducible}" we can modify the procedure to return a factorization of $L-\lambda_0$ for every $\lambda_0\in\bbC$. Recall that $\cC_K(L)$ is isomorphic to the ring of the plane affine algebraic curve $\Gamma$ defined by the ideal $I=(f_1)$ with 
\[f_1=\dres(L-\lambda,A_1-\mu)= -\mu^3+(h-\lambda)^4.\]
The curve $\Gamma$ is rational, with parametrization $\aleph(\tau)=(h-\tau^3, \tau^4)$, $\tau\in\bbC$.

The first differential subresultant of $L-\lambda$ and $A_1-\mu$ equals $\phi_{1,0}+\phi_{1,1}\partial$ with 
\begin{align*}
    \phi_{1,0}=&\mu(h-\lambda)- \frac{5\mu}{x^3} -\frac{20(h - \lambda)}{x^4} - \frac{300}{x^7}, \\
    \phi_{1,1}= &\frac{(h-\lambda)^2}{x}- \frac{5\mu}{x^2} - \frac{100}{x^6}.
\end{align*}
Observe that $\phi_{1,1}$ is never zero, not even at the singular point $(h,0)$ of $\Gamma$.

For every $P_0\in\Gamma$ there exists $\tau_0\in\bbC$, such that $P_0=\aleph(\tau_0)$.
Then
\[\phi_0=\frac{\phi_{1,0}(\aleph(\tau_0))}{\phi_{1,1}(\aleph(\tau_0))}=\frac{\tau_0^4x^4 + 5\tau_0^3x^3 + 15\tau_0^2x^2 + 30 \tau_0 x + 30 }{(\tau_0^3x^3 + 5\tau_0^2x^2 + 10\tau_0 x + 10)x}.\]
Hence, for every $\lambda_0\in\bbC$ the factorization of $L-\lambda_0$ in $\bbC(x)[\partial]$ is
\[
    L-\lambda_0 = \left( \partial_0^2+\phi_0\partial+\phi_0^2+2\phi_0'-\frac{15}{x^2 } \right) \left(  \partial+\phi_0 \right), \mbox{ with }\lambda_0 = h-\tau_0^3.
\]
\end{ex}

\section{Conclusions}

In this work we considered the  factorization problem of a third order ordinary differential operator $L-\lambda$, for a spectral parameter $\lambda$, whose coefficients belong to a differential field $K$ with field of constants $\bbC$. It is assumed that $L$ is Boussinesq operator, implying it is algebro-geometric over $K$ and guarantying a nontrivial centralizer. The centralizer is proved to be isomorphic to the ring of an affine algebraic curve, the famous spectral curve $\Gamma$. As far as we know, this is the first time that the ring structure of the centralizer has been taken into consideration, as the appropriate algebraic structure to allow the factorization of the third order operator $L-\lambda$.

Based on the nature of $\Gamma$, {which is proved to be in $\bbC^3$}, we give the symbolic algorithm \SpF Algorithm \ref{alg-SpF} to factor $L-\lambda_0$, for almost every point $P_0 = (\lambda_0 ,\mu_0 ,\gamma_0 )$ of the spectral curve, using differential subresultants. In this context, the first explicit example of a non-planar spectral curve arises, Example \ref{ex-alabeada}, as well as the  factorization it provides for $ L- \lambda_0$.  As far as we know, this is a new algorithm {specifically designed for the case of non-planar spectral curves that appear for Boussinesq operators}. Factorizations over planar spectral curves have been previously presented, for instance for second order operators \cite{MRZ1}, or fourth order operators with rank $2$ in \cite{PRZ2019}. In addition, Boussinesq operators provide examples of planar spectral curves as shown in Example \ref{ex-factoring-curva-plana}.

The present work is the natural continuation in a program dedicated to the factorization of rank $1$ algebro-geometric differential operators, that was already successful in the order $2$ case, \cite{MRZ1}. Our ultimate goal is an effective approach to the {\sl direct} spectral problem and the development of the appropriate {\sl spectral} Picard-Vessiot fields containing all the solutions of the operator $L-\lambda$. Spectral Picard-Vessiot fields were studied for Schrödinger operators in \cite{MRZ2}. The development of algorithms for their computation is part of an ongoing project that focuses on the study of the centralizers of operators with potentials verifying one of Gel'fand-Dickii integrable hierarchies \cite{GD}.

\section{Acknowledgments}

The authors would like to thank their colleagues (especially those from the Integrability seminar in Madrid) for interesting discussions that encouraged them to write this paper.
A sincere acknowledgement to Rafael Hernandez Heredero for providing the references on the Boussinesq hierarchy and for the enlightening conversations on this topic, that contributed to a better understanding of the mechanisms behind integrability. The authors really appreciate the exchange of ideas regarding the non-planar spectral curves with Emma Previato, we are very grateful for the conversations we had on spectral problems and for her constant encouragement to pursuit this project.

The first author is a member of the Research Group ``Modelos ma\-tem\'aticos no lineales". The second  author acknowledges the support of Grupo UCM 910444.

\bibliographystyle{plain}

\bibliography{Bibliography.bib}



\end{document}